\newtheorem{theorem}{Theorem}[section]
\newtheorem{lemma}{Lemma}[section]
\newtheorem{corollary}{Corollary}[section]
\newtheorem{proposition}{Proposition}[section]
\numberwithin{equation}{section}
\begin{document}
\title{Ando-Hiai and Golden-Thomspon inequalities}
\author{Mohammed Sababheh and Hamid Reza Moradi}
\subjclass[2010]{Primary 47A63, Secondary 46L05, 47A60.}
\keywords{Ando-Hiai inequality, Golden-Thompson inequality, P\'olya-Szeg\"o inequality, operator monotone function} \maketitle

\begin{abstract}
The original Ando-Hiai and Golden-Thompson inequalities present comparisons for the operator geometric mean $\sharp_v$ when $0\leq v\leq 1.$ Our main target in this article is to study these celebrated inequalities for means other than the geometric mean and for the geometric mean when $v\not\in [0,1].$
\end{abstract}
%------------------------------------------------------------------------------------%
\pagestyle{myheadings}
\markboth{\centerline {Ando-Hiai and Golden-Thompson inequalities}}
{\centerline {M. Sababheh \& H.R. Moradi}}
\bigskip
\bigskip
%------------------------------------------------------------------------------------%
%------------------------------------------------------------------------------------%
\section{Introduction}
For two complex $n\times n$ Hermitian matrices $A$ and $B$, the Golden-Thompson inequality states that \cite{golden,thompson}
\begin{equation}\label{ineq_original_GT_Tr}
{\text{tr}}\left(e^{A+B}\right)\leq {\text{tr}}\left(e^{A}e^{B}\right),
\end{equation}
as a non-commutative version of the scalar identity $e^{a+b}=e^{a}e^{b}.$ The inequality \eqref{ineq_original_GT_Tr} has its application in statistical mechanics and random matrix theory. In an unpublished work,  Dyson proved \eqref{ineq_original_GT_Tr} when he was studying random matrix theory and its application to nuclear physics. Thus, this inequality is expected to have future application in this direction. Further applications of this inequality can be found in \cite{survey}.

The inequality \eqref{ineq_original_GT_Tr} has been extended in various forms, among which we are interested in the following unitarily invariant norm version \cite{ando1}
\begin{equation}\label{ineq_GT_ando_hiai}
\left\|\left(e^{pA}\sharp_ve^{pB}\right)^{\frac{1}{p}}\right\|\leq \left\|e^{A\nabla_vB}\right\|,\;0\leq v\leq 1,\;p>0
\end{equation}
where $A,B$ are Hermitian, $\sharp_v$ is the geometric mean, $\nabla_v$ is the arithmetic mean and $\|\;\;\|$ is any unitarily invariant norm.

The inequality \eqref{ineq_GT_ando_hiai} has been reversed in \cite[Theorem 3.4]{seo} using the Specht ratio. Very recently, the authors in \cite[Corollary 2.7]{gumus} have shown  a stronger reverse. However, all these results treat the geometric mean for $0\leq v\leq 1.$

One target of the current paper is to prove variants of \eqref{ineq_GT_ando_hiai}. These new versions extend the domain of $v$ to values outside the interval $[0,1]$ and also extend the treatment of the geometric mean $\sharp_v$ to any operator mean between the harmonic mean $!_v$ and the arithmetic mean $\nabla_v$. However, these extensions will be at the cost of an additional constant. For example, we prove that, under mild conditions on $A,B$,
 $$\left\|\left(e^{pA}\sharp_v e^{pB}\right)^{\frac{1}{p}}\right\|\leq \gamma_p^{\frac{1}{p}}\left\|e^{A\nabla_vB}\right\|,\;v>1$$ where $\gamma_p$ is a certain constant. Moreover, we prove that, for some constant $L$, depending on $p$,
 $$\left\|\left(e^{pA}\sigma_v e^{pB}\right)^{\frac{1}{p}}\right\|\leq L^{\frac{1}{p}}\left\|e^{A\nabla_vB}\right\|,\;0\leq v\leq 1,$$ for any operator mean $\sigma_v$ between $!_v$ and $\nabla_v$. To the best of our knowledge, such extensions have not been considered earlier in the literature.

Our methods for proving such results allow us, also, to obtain variants of the well known Ando-Hiai inequality, which 
 asserts that if $A$ and $B$ are positive operators, then for any $v\in \left[ 0,1 \right]$, we have \cite{ando1}
\begin{equation}\label{8}
\left\| {{A}^{p}}{{\sharp}_{v}}{{B}^{p}} \right\|\le {{\left\| A{{\sharp}_{v}}B \right\|}^{p}}\quad\text{ for all }p>1,
\end{equation}
or equivalently,
\[A{{\sharp}_{v}}B\le I\quad\text{ }\Rightarrow \quad\text{ }{{A}^{p}}{{\sharp}_{v}}{{B}^{p}}\le I\quad\text{ for all }p>1.\]
A counterpart to the Ando--Hiai inequality \eqref{8} has been presented by Nakamoto and
Seo \cite{nakamoto} as follows
\begin{equation}\label{11}
{{\left\| A{{\sharp}_{v}}B \right\|}^{p}}\le \frac{1}{K\left( {{h}^{2p}},v \right)}\left\| {{A}^{p}}{{\sharp}_{v}}{{B}^{p}} \right\|\quad\text{ for all }p>1, 0\leq v\leq 1
\end{equation}
whenever $mI\le A,B\le MI$ for some scalars $0<m<M$, $h=\frac{M}{m}$, and 
\[K\left( h,v \right)\equiv \frac{{{h}^{v}}-h}{\left( v-1 \right)\left( h-1 \right)}{{\left( \frac{v-1}{v}\frac{{{h}^{v}}-1}{{{h}^{v}}-h} \right)}^{v}}\] is a generalized Kantorovich constant.

Our second target in this article is to extend the domain of the Ando-Hiai inequality to $v\not\in [0,1]$ and to extend it to arbitrary means, rather than the geometric mean. For example, we show that when $0<mI\leq A,B\leq MI$ and $\sigma_v,\tau_v$ are arbitrary operator means between the harmonic and arithmetic means,
$$\left\| A{{\sigma }_{v}}B \right\|^{p}\le L\left( {{m}^{p}},{{M}^{p}} \right)\left\| {{A}^{p}}{{\tau }_{v}}{{B}^{p}} \right\|\quad\text{ for all }p>1$$ where $L\left( {{m}^{p}},{{M}^{p}}\right)$ is a constant independent of $\sigma_v$ and $\tau_v$.

The above extensions we prove will follow as special cases of a more general treatment of operator monotone functions. This treatment, of operator monotone functions, will imply the P\'{o}lya-type inequality
$$f\left( \Phi \left( A{{\sigma }_{v}}B \right) \right)\le \xi \psi \left( f\left( \Phi \left( A \right) \right){{\tau }_{v}}f\left( \Phi \left( B \right) \right) \right),$$ where $f$ is an operator monotone function, $A,B$ are positive operators related via a sandwich condition, $\xi,\psi$ are certain constants and $\Phi$ is a normalized positive linear map.

In the sequel, $\mathcal{B}(\mathcal{H})$ will denote the algebra of bounded linear operators on a complex Hilbert space $\mathcal{H}$, while $\mathcal{M}_n$ will stand for the algebra of $n\times n$ complex matrices. Our Golden-Thompson inequalities will be valid for matrices in $\mathcal{M}_n$, while all other results will be valid for operators in $\mathcal{B}(\mathcal{H}).$ The letters $m,M$ and their indices will be used for positive real numbers, while $A,B$ will stand for Hilbert space operators (or matrices in the case of the Golden-Thompson inequality). Further, all monotone functions $f$ we deal with will be of the form $f:(0,\infty)\to (0,\infty).$
\section{Extending the domain of the  Ando-Hiai and Golden-Thompson inequalities}
In this section, we present Ando-Hiai and  Golden-Thompson inequalities for $v\not\in[0,1].$ 

The following inequalities were pointed out in \cite{1}. For completeness, we present a simple proof.
 
 \begin{lemma}\label{lemma_amgm_reversed}
  Let $A,B\in\mathcal{B}(\mathcal{H})$ be such that $0<m_2I\le A\le m_1I<M_1I\le B\le M_2I$ and $v\not\in[0,1].$ Then
		\[\frac{m_1{{\sharp}_{v}}M_1}{m_1{{\nabla }_{v}}M_1}A{{\nabla }_{v}}B\le A{{\sharp}_{v}}B\le \frac{m_2{{\sharp}_{v}}M_2}{m_2{{\nabla }_{v}}M_2}A{{\nabla }_{v}}B\]
		and
		\[\frac{m_1{{!}_{v}}M_1}{m_1{{\sharp}_{v}}M_1}A{{\sharp}_{v}}B\le A{{!}_{v}}B\le \frac{m_2{{!}_{v}}M_2}{m_2{{\sharp}_{v}}M_2}A{{\sharp}_{v}}B.\]
\end{lemma}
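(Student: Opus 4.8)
The plan is to reduce both two-sided operator estimates to the monotonicity of a single scalar function on an interval, via the standard congruence transformation and the spectral theorem. First I would put $X=A^{-1/2}BA^{-1/2}$, a positive invertible operator. From $M_1I\le B\le M_2I$ and $m_1^{-1}I\le A^{-1}\le m_2^{-1}I$ one gets
\[
\frac{M_1}{m_1}I\le X\le \frac{M_2}{m_2}I ,
\]
so, writing $h_1=M_1/m_1$ and $h_2=M_2/m_2$, we have $1<h_1\le h_2$ and the spectrum of $X$ lies in $[h_1,h_2]\subset(1,\infty)$. Throughout I rely on the explicit functional-calculus expressions $A\sharp_vB=A^{1/2}X^vA^{1/2}$ and $A\nabla_vB=A^{1/2}((1-v)I+vX)A^{1/2}$, valid for every real $v$ (for $v\notin[0,1]$ the ``geometric mean'' is read off this formula, not from Kubo--Ando axioms).

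For the first inequality, since conjugation by the invertible $A^{1/2}$ preserves the order, the claim is equivalent to $c_1((1-v)I+vX)\le X^v\le c_2((1-v)I+vX)$ with $c_1=\frac{m_1\sharp_vM_1}{m_1\nabla_vM_1}$ and $c_2=\frac{m_2\sharp_vM_2}{m_2\nabla_vM_2}$. By the spectral theorem, and after clearing the (positive) denominator, this reduces to the scalar two-sided bound $c_1\le g(t)\le c_2$ for $t\in[h_1,h_2]$, where
\[
g(t)=\frac{t^v}{(1-v)+vt}=\frac{1\sharp_vt}{1\nabla_vt}.
\]
By degree-zero homogeneity $g(h_i)=c_i$, so it remains only to show $g$ is increasing on $(1,\infty)$. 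This is the computational heart, but it is brief: differentiation gives
\[
g'(t)=v(1-v)(1-t)\,t^{\,v-1}\left((1-v)+vt\right)^{-2}.
\]
For $v\notin[0,1]$ we have $v(1-v)<0$, and $1-t<0$ for $t>1$, so $g'(t)>0$; hence $g(h_1)\le g(t)\le g(h_2)$ on $[h_1,h_2]$, which is exactly $c_1\le g(t)\le c_2$. Reversing the chain of equivalences yields the first displayed inequality.

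For the second inequality I would run the identical scheme with the harmonic mean, starting from $A!_vB=A^{1/2}\left((1-v)I+vX^{-1}\right)^{-1}A^{1/2}$ (using $A^{1/2}B^{-1}A^{1/2}=X^{-1}$). Congruence by $A^{1/2}$ and the spectral theorem reduce the statement to $\tilde c_1\le \tilde g(t)\le \tilde c_2$ on $[h_1,h_2]$, where
\[
\tilde g(t)=\frac{1!_vt}{1\sharp_vt}=\frac{t^{\,1-v}}{(1-v)t+v},\qquad \tilde c_i=\frac{m_i!_vM_i}{m_i\sharp_vM_i}=\tilde g(h_i).
\]
A second differentiation produces the same sign pattern,
\[
\tilde g'(t)=v(1-v)(1-t)\,t^{-v}\left((1-v)t+v\right)^{-2},
\]
so $\tilde g$ is again increasing on $(1,\infty)$ for $v\notin[0,1]$, and the endpoint evaluation closes the argument.

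The only genuine obstacle is the bookkeeping around positivity, since for $v\notin[0,1]$ neither $\sharp_v$ nor the relevant ``means'' are Kubo--Ando means and their axioms are unavailable. Clearing the denominators requires $(1-v)+vt>0$ (for $\nabla_v$) and $(1-v)t+v>0$ (for $!_v$) on all of $[h_1,h_2]$; a short check shows the former is automatic when $v>1$ and the latter when $v<0$, while in the complementary cases it is precisely the mild positivity hypothesis that makes the stated means positive operators. Once these positivity facts are secured, every step above is an equivalence, so the two-sided estimates follow immediately from the scalar monotonicity of $g$ and $\tilde g$.
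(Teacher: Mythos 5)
Your proof is correct and follows essentially the same route as the paper: reduce to the scalar ratio of means of $1$ and $t=x$ via the congruence $X=A^{-1/2}BA^{-1/2}$, whose spectrum lies in $[M_1/m_1,M_2/m_2]\subset(1,\infty)$, and check monotonicity of that ratio by differentiation (you work with the reciprocal $g=1/f$ of the paper's function, which is only a cosmetic difference). If anything, you are more careful than the paper about the positivity of the denominators when $v\notin[0,1]$, a point the paper only addresses in a remark after the lemma.
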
 
\begin{proof}
Let, for $v\not\in[0,1]$, $$f(x)=\frac{1-v+v\;x}{x^v},\;1<\frac{M_1}{m_1}\leq x\leq \frac{M_2}{m_2}.$$ Then $$f'(x)=v(1-v)(x-1)x^{1+v}.$$ That is, $f$ is decreasing for $x>1$, since $v\not\in[0,1].$ Therefore,
$$\frac{M_1}{m_1}\leq x\leq \frac{M_2}{m_2}\Rightarrow f\left(\frac{M_1}{m_1}\right)\leq f(x)\leq f\left(\frac{M_2}{m_2}\right).$$
Then the first two desired inequalities follow by applying a standard functional calculus argument using $x=A^{-\frac{1}{2}}BA^{-\frac{1}{2}}.$ The other inequalities involving the harmonic mean follow similarly.
\end{proof}

As a corollary, we have the following auxiliary inequality that we will use to prove the desired Ando-Hiai and Golden-Thompson inequalities. Notice first that when $f:(0,\infty)\to (0,\infty)$ is operator monotone, it is operator concave \cite{ando}. Therefore, when $v\not\in [0,1],$ we have \cite{sab_laaa}
$$f(A\nabla_v B)\leq f(A)\nabla_vf(B).$$ Further, the arithmetic-geometric mean inequality states that for $0\leq v\leq 1,$ one has $A\sharp_v B\leq A\nabla_v B$. However, when $v\not\in[0,1],$ the inequality is reversed.\\
We remark here that our functions $f$ will be defined on $(0,\infty).$ This is the main reason we consider $v\geq 1.$  For example, we will be dealing with the quantity $ m_2\nabla_v M_2$, for $m_2<M_2.$ Notice that when $v\geq 1,$ we have $m_2\nabla_vM_2\geq M_2>0$. However, if $v<0$, we do not guarantee positivity of $ m_2\nabla_v M_2$.

\begin{corollary}\label{cor_oper_mono}
Let $0<m_2I\leq A\leq m_1I<M_1I\leq B\leq M_2I$ and $v\geq 1.$ Then for an operator monotone function $f$,
\begin{equation}\label{needed_02}
	f\left( A{{\sharp}_{v}}B \right)\le \left( f\left( C\;A \right){{\sharp}_{v}}f\left(C\; B \right) \right),
		\end{equation}
where
$C=\frac{m_2{{\sharp}_{v}}M_2}{m_2{{\nabla }_{v}}M_2}$. 
\end{corollary}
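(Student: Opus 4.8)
The plan is to derive \eqref{needed_02} as a short chain of inequalities that threads together the upper bound of Lemma \ref{lemma_amgm_reversed}, the operator monotonicity of $f$, the reversed operator concavity noted just before the corollary, and the reversed arithmetic--geometric mean inequality. First I would start from the upper estimate of Lemma \ref{lemma_amgm_reversed}, which under the sandwich hypothesis and for $v\ge 1$ gives
$$A\sharp_v B\le C\,(A\nabla_v B),\qquad C=\frac{m_2\sharp_v M_2}{m_2\nabla_v M_2}>0.$$
Before applying $f$ I would check positivity of the right-hand side: since $A\le m_1I<M_1I\le B$ we have $B-A\ge(M_1-m_1)I>0$, so $A\nabla_v B=B+(v-1)(B-A)\ge M_1I>0$ when $v\ge 1$. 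This is exactly the place where the restriction $v\ge 1$ (rather than a general $v\notin[0,1]$) is used, as flagged in the remark preceding the statement, and it keeps every argument of $f$ in $(0,\infty)$.

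Next I would feed the displayed inequality into the operator monotonicity of $f$ to get $f(A\sharp_v B)\le f\bigl(C(A\nabla_v B)\bigr)$. Since the positive scalar $C$ passes through the affine arithmetic mean, $C(A\nabla_v B)=(CA)\nabla_v(CB)$, and hence
$$f(A\sharp_v B)\le f\bigl((CA)\nabla_v(CB)\bigr).$$
Now, $f$ being operator monotone is operator concave, and for $v\ge 1$ the concavity reverses (the inequality recorded as $f(A\nabla_v B)\le f(A)\nabla_v f(B)$ in the discussion above the corollary); applying it to the positive operators $CA$ and $CB$ yields $f\bigl((CA)\nabla_v(CB)\bigr)\le f(CA)\nabla_v f(CB)$.

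Finally I would invoke the reversed arithmetic--geometric mean inequality, which for $v\ge 1$ reads $X\nabla_v Y\le X\sharp_v Y$; with $X=f(CA)$ and $Y=f(CB)$ this gives $f(CA)\nabla_v f(CB)\le f(CA)\sharp_v f(CB)$. Concatenating the three estimates produces
$$f(A\sharp_v B)\le f(CA)\sharp_v f(CB)=f(C\,A)\sharp_v f(C\,B),$$
which is precisely \eqref{needed_02}. The argument is essentially bookkeeping of known inequalities, so I do not expect a genuine obstacle; the one point that must be handled with care is the positivity check above, ensuring each operator to which $f$ is applied is positive definite, which is what makes the monotonicity and (reversed) concavity of $f$ legitimate and is the reason the hypothesis is taken to be $v\ge 1$.
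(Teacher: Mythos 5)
Your proof is correct and follows essentially the same route as the paper's: the upper estimate of Lemma \ref{lemma_amgm_reversed} combined with operator monotonicity, factoring the scalar $C$ through $\nabla_v$, the reversed concavity inequality $f(X\nabla_v Y)\le f(X)\nabla_v f(Y)$ for $v\ge 1$, and finally the reversed arithmetic--geometric mean inequality. The explicit positivity check on $A\nabla_v B$ is a welcome addition but does not alter the argument.
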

\begin{proof}
 Noting that $f$ is operator concave and using Lemma \ref{lemma_amgm_reversed}, we obtain
\begin{align*}
f\left( A{{\sharp}_{v}}B \right)&\le f\left(C(A\nabla_vB)\right)\\
&=f\left((CA)\nabla_v(CB)\right)\\
&\leq f(CA)\nabla_v f(CB)\quad \;(f\;{\text{being\;operator\;concave\;and}}\;v\geq 1)\\
&\leq f(CA)\sharp_vf(CB)\quad ({\text{since}}\;v\geq 1).
\end{align*}
\end{proof}
Now we are ready to present the Ando-Hiai  and the Golden-Thompson inequalities for $v\geq 1.$
\begin{theorem}
Let  $0<m_2I\leq A\leq m_1I<M_1I\leq B\leq M_2I, v\geq 1$ and  $\|\;\;\|$ be an arbitrary  unitarily invariant norm.
\begin{enumerate}
\item (Ando-Hiai inequality) If $p>1$ then
$$\|A^p\sharp_vB^p\|\leq C_p\|A\sharp_vB\|^p,\;{\text{where}}\;\;C_p=\frac{m_2^p{{\sharp}_{v}}M_2^p}{m_2^p{{\nabla }_{v}}M_2^p}.$$ Equivalently,
$$\|A\sharp_vB\|\leq I\Rightarrow \|A^p\sharp_vB^p\|\leq C_p.$$
\item (Golden-Thompson inequality) If $p>0$, then $$\left\|\left(e^{pA}\sharp_v e^{pB}\right)^{\frac{1}{p}}\right\|\leq \gamma_p^{\frac{1}{p}}\left\|e^{A\nabla_vB}\right\|,$$
where $\gamma_p=\frac{e^{pm_2}\sharp_ve^{pM_2}}{e^{pm_2}\nabla_ve^{pM_2}}.$
\end{enumerate}
\end{theorem}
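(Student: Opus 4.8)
The plan is to derive both parts from the mean comparisons of Lemma \ref{lemma_amgm_reversed} and Corollary \ref{cor_oper_mono} by applying them not to $A,B$ directly but to suitably transformed operators, and then to convert the resulting operator inequality into a norm inequality. For the Ando--Hiai part I would feed $A^p,B^p$ into Corollary \ref{cor_oper_mono}; for the Golden--Thompson part I would feed $e^{pA},e^{pB}$ into Lemma \ref{lemma_amgm_reversed}. The two recurring technical ingredients are the homogeneity $(cX)\sharp_v(cY)=c\,(X\sharp_vY)$ for a scalar $c>0$ (valid for every real $v$ from the definition $X\sharp_vY=X^{1/2}(X^{-1/2}YX^{-1/2})^vX^{1/2}$), and the passage from an operator inequality $0\le P\le\lambda Q$ to norm inequalities for fractional powers.

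For part (1), since $t\mapsto t^p$ is increasing the hypothesis gives $0<m_2^pI\le A^p\le m_1^pI<M_1^pI\le B^p\le M_2^pI$. As $p>1$, the function $f(t)=t^{1/p}$ is operator monotone, so Corollary \ref{cor_oper_mono} applied to $A^p,B^p$ yields
\[(A^p\sharp_vB^p)^{1/p}\le(C_pA^p)^{1/p}\sharp_v(C_pB^p)^{1/p}=C_p^{1/p}\,(A\sharp_vB),\]
where the last equality uses homogeneity and $C_p=\frac{m_2^p\sharp_vM_2^p}{m_2^p\nabla_vM_2^p}$. Taking a unitarily invariant norm (monotone on positive operators) gives $\|(A^p\sharp_vB^p)^{1/p}\|\le C_p^{1/p}\|A\sharp_vB\|$. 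I would then raise to the power $p$ and apply the standard norm inequality $\|A^p\sharp_vB^p\|\le\|(A^p\sharp_vB^p)^{1/p}\|^{p}$ (the case $Y\ge0$, $p\ge1$ of $\|Y^p\|\le\|Y\|^p$) to conclude $\|A^p\sharp_vB^p\|\le C_p\|A\sharp_vB\|^p$; the equivalent normalized statement is then immediate.

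For part (2), monotonicity of $t\mapsto e^{pt}$ turns the hypothesis into $0<e^{pm_2}I\le e^{pA}\le e^{pm_1}I<e^{pM_1}I\le e^{pB}\le e^{pM_2}I$ (it is $m_1<M_1$ that separates the two blocks, and positivity of $e^{pA}\nabla_ve^{pB}$ is guaranteed since $v\ge1$, as observed before Corollary \ref{cor_oper_mono}). The first inequality of Lemma \ref{lemma_amgm_reversed} applied to $e^{pA},e^{pB}$ gives $e^{pA}\sharp_ve^{pB}\le\gamma_p\,(e^{pA}\nabla_ve^{pB})$ with $\gamma_p$ as stated. To handle the fractional power for \emph{every} $p>0$, where $t\mapsto t^{1/p}$ need not be operator monotone, I would argue at the level of eigenvalues: by Weyl monotonicity $P\le\gamma_pQ$ forces $\lambda_j^{\downarrow}(P)\le\gamma_p\lambda_j^{\downarrow}(Q)$, hence $\lambda_j^{\downarrow}(P^{1/p})\le\gamma_p^{1/p}\lambda_j^{\downarrow}(Q^{1/p})$ for all $j$, a weak majorization which yields $\|P^{1/p}\|\le\gamma_p^{1/p}\|Q^{1/p}\|$ for every unitarily invariant norm. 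With $P=e^{pA}\sharp_ve^{pB}$ and $Q=e^{pA}\nabla_ve^{pB}$ this reads
\[\left\|(e^{pA}\sharp_ve^{pB})^{1/p}\right\|\le\gamma_p^{1/p}\left\|(e^{pA}\nabla_ve^{pB})^{1/p}\right\|,\]
so it remains to prove the arithmetic-mean comparison $\|(e^{pA}\nabla_ve^{pB})^{1/p}\|\le\|e^{A\nabla_vB}\|$ for $v\ge1$.

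The main obstacle is exactly this last comparison. In the commuting (in particular scalar) case it is the convexity statement $(1-v)e^{pa}+ve^{pb}\le e^{p((1-v)a+vb)}$, which holds for $v\ge1$ because then $(1-v)a+vb$ lies outside $[a,b]$ and convex extrapolation of the chord of $t\mapsto e^{pt}$ applies; taking $1/p$ powers and norms settles that case. For non-commuting $A,B$ it is a genuine Golden--Thompson inequality, and the difficulty is that since $1-v<0$ the coefficients are not a convex combination, so the usual increasing-in-$p$ Golden--Thompson monotonicity does not apply and the inequality in fact runs in the reverse direction. I expect to establish it through the Lie--Trotter formula $\lim_{p\to0^+}(e^{pA}\nabla_ve^{pB})^{1/p}=e^{A\nabla_vB}$ together with a log-majorization monotonicity in $p$ of Araki--Lieb--Thirring type (or by citing the appropriate extension of \eqref{ineq_GT_ando_hiai}); combining it with the displayed reduction produces the stated bound with constant $\gamma_p^{1/p}$.
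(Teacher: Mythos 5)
Your part (1) is essentially the paper's own argument: Corollary \ref{cor_oper_mono} applied to $A^p,B^p$ with $f(t)=t^{1/p}$, homogeneity of $\sharp_v$, then passage to a norm inequality (the paper routes the last step through a unitary conjugation rather than $\|Y^p\|\le\|Y\|^p$, but the content is the same). Part (2), however, contains a genuine gap. After the reduction via Lemma \ref{lemma_amgm_reversed} and Weyl monotonicity, your whole proof hinges on the comparison $\left\|\left(e^{pA}\nabla_ve^{pB}\right)^{1/p}\right\|\le\left\|e^{A\nabla_vB}\right\|$ for $v\ge1$, which you verify only in the commuting case and then propose to obtain from ``a log-majorization monotonicity in $p$ of Araki--Lieb--Thirring type.'' This is exactly where the plan breaks down: the log-majorization machinery behind \eqref{ineq_GT_ando_hiai} and Ando--Hiai runs through antisymmetric tensor powers, which are multiplicative ($\Lambda^k(XY)=\Lambda^k(X)\Lambda^k(Y)$, $\Lambda^k(X\sharp_vY)=\Lambda^k(X)\sharp_v\Lambda^k(Y)$) but do not interact with sums, so there is no ALT-type monotonicity for $p\mapsto(e^{pA}\nabla_ve^{pB})^{1/p}$. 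You have reduced the theorem to an unproven statement at least as hard as the theorem itself, and your own wording (``I expect to establish it\ldots'') concedes the step is missing.

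The paper avoids this entirely by never comparing against the arithmetic mean at a finite scale. It applies Corollary \ref{cor_oper_mono} with $f(t)=t^{q/p}$, $0<q<p$, to the pair $(e^{pA},e^{pB})$; homogeneity turns the right-hand side into $\gamma_p^{q/p}\left(e^{qA}\sharp_ve^{qB}\right)$, i.e.\ a comparison between \emph{geometric} means at the two scales $p$ and $q$, with a constant that is harmless as $q$ varies. The $\|X\|_q=\left\||X|^{1/q}\right\|^q$ trick then yields $\left\|\left(e^{pA}\sharp_ve^{pB}\right)^{1/p}\right\|\le\gamma_p^{1/p}\left\|\left(e^{qA}\sharp_ve^{qB}\right)^{1/q}\right\|$, and the arithmetic mean appears only in the limit $q\to0^+$ through the Hiai--Petz limit $\left(e^{qA}\sharp_ve^{qB}\right)^{1/q}\to e^{A\nabla_vB}$, which \cite{nakamoto} notes remains valid for $v\notin[0,1]$. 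To repair your outline, replace the single application of Lemma \ref{lemma_amgm_reversed} at scale $p$ by this two-scale application of Corollary \ref{cor_oper_mono}; the rest of your normed-limit argument then goes through.
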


\begin{proof}
\begin{enumerate}
\item For $p>1$, let $f(t)=t^{\frac{1}{p}}.$ Then $f$ is operator monotone. Therefore,  replacing $A$ and $B$ by $A^p$ and $B^p$ in \eqref{needed_02}, we obtain
\begin{align*}
(A^p\sharp_vB^p)^{\frac{1}{p}}&\leq (C_pA^p)^{\frac{1}{p}}\sharp_v (C_pB^p)^{\frac{1}{p}};\quad {\text{where\;}} C_p=\frac{m_2^p{{\sharp}_{v}}M_2^p}{m_2^p{{\nabla }_{v}}M_2^p}\\
&=C_p^{\frac{1}{p}}\left(A\sharp_vB\right).
\end{align*}
Then a unitary matrix $U$ exists such that
$$(A^p\sharp_vB^p)\leq C_p\;U\left(A\sharp_vB\right)^pU^*.$$
This implies the Ando-Hiai inequality
$$\|A^p\sharp_vB^p\|\leq C_p\;\|A\sharp_vB\|^p, \quad{\text{where}}\;p>1\;{\text{and}}\; v\geq 1.$$

\item If $0<q<p$, let $f(t)=t^{\frac{q}{p}}$ in \eqref{needed_02} and replace $(A,B)$ by $(e^{pA},e^{pB}).$ Then
 
$$\left(e^{pA}\sharp_v e^{pB}\right)^{\frac{q}{p}}\leq \gamma_{p}^{\frac{q}{p}}\left(e^{qA}\sharp_v e^{qB}\right),\quad{\text{where}}\;\gamma_p=\frac{e^{pm_2}\sharp_ve^{pM_2}}{e^{pm_2}\nabla_ve^{pM_2}}.$$
Consequently, if $\|\;\;\|$ is a unitarily invariant norm, we have
$$\left\|\left(e^{pA}\sharp_v e^{pB}\right)^{\frac{q}{p}}\right\|\leq \left\|\gamma_{p}^{\frac{q}{p}}\left(e^{qA}\sharp_v e^{qB}\right)\right\|.$$

In particular, if $0<q<1$ and $\|\;\;\|$ is a given unitarily invariant norm, then $\|\;\;\|_q$ defined by $\|X\|_q=\left\|\;|X|^{\frac{1}{q}}\right\|^q$ is a unitarily invariant norm. Therefore,

$$\left\|\left(e^{pA}\sharp_v e^{pB}\right)^{\frac{q}{p}}\right\|_q\leq \left\|\gamma_{p}^{\frac{q}{p}}\left(e^{qA}\sharp_v e^{qB}\right)\right\|_q\Rightarrow \left\|\left(e^{pA}\sharp_v e^{pB}\right)^{\frac{1}{p}}\right\|\leq \gamma_{p}^{\frac{1}{p}}\left\|\left(e^{qA}\sharp_v e^{qB}\right)^{\frac{1}{q}}\right\|.$$
Letting $q\to 0^+$, we obtain
$$\left\|\left(e^{pA}\sharp_v e^{pB}\right)^{\frac{1}{p}}\right\|\leq \gamma_{p}^{\frac{1}{p}}\left\|e^{A\nabla_vB}\right\|.$$
We remark here that in \cite{hiai_petz} the limit
$$\lim_{q\to 0^{+}}\left(e^{qA}\sharp_v e^{qB}\right)^{\frac{1}{q}}=e^{A\nabla_vB}$$ was shown for $0\leq v\leq 1.$ As pointed out in \cite{nakamoto}, the same proof applies for $v\not\in[0,1].$
\end{enumerate}
\end{proof}

\section{Ando-Hiai and Golden-Thompson inequalities for arbitrary means}
The original Ando-Hiai and Golden-Thompson inequalities and their extensions treat the geometric mean $\sharp_v.$ In this section, we present variants of these important inequalities for arbitrary means.

In our recent work \cite{1}, we showed that if $A,B\in \mathcal{B}\left( \mathcal{H} \right)$ are such that $sA\le B\le tA$, then for any $v\in \left[ 0,1 \right]$
\begin{equation}\label{6}
\frac{1}{\xi }A{{\nabla }_{v}}B\le A{{\sharp}_{v}}B\le \psi A{{!}_{v}}B
\end{equation}
	where $\xi =\max \left\{ \frac{\left( 1-v \right)+vs}{{{s}^{v}}},\frac{\left( 1-v \right)+vt}{{{t}^{v}}} \right\}$ and $\psi =\max \left\{ {{s}^{v}}\left( \left( 1-v \right)+\frac{v}{s} \right),{{t}^{v}}\left( \left( 1-v \right)+\frac{v}{t} \right) \right\}$.
	The following few inequalities will be needed to prove the next main result.

\begin{lemma}\label{lemma_oper_mon_needed_har}
Let $A,B\in \mathcal{B}\left( \mathcal{H} \right)$ be positive invertible and let $f:(0,\infty)\to (0,\infty)$ be an operator monotone  function. Then for any $v\in \left[ 0,1 \right]$
$$f\left( A{{!}_{v}}B \right)\leq f(A)!_v f(B).$$
\end{lemma}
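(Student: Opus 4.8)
The plan is to reduce the harmonic-mean inequality to the arithmetic-mean inequality already available for operator concave functions, exploiting the duality between $!_v$ and $\nabla_v$ under operator inversion. The device is the \emph{transpose} function
$$g(x)=\frac{1}{f\left(\frac1x\right)},\qquad x>0,$$
which is again operator monotone whenever $f:(0,\infty)\to(0,\infty)$ is (a classical fact about operator monotone functions, see \cite{ando}); being operator monotone, $g$ is in particular operator concave.

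First I would substitute $X=A^{-1}$ and $Y=B^{-1}$, which are again positive invertible. Using $A{{!}_{v}}B=(A^{-1}\nabla_vB^{-1})^{-1}=(X\nabla_vY)^{-1}$ together with the identity $f(Z^{-1})=g(Z)^{-1}$, the left-hand side becomes
$$f\left( A{{!}_{v}}B \right)=f\big((X\nabla_vY)^{-1}\big)=g(X\nabla_vY)^{-1}.$$
On the right-hand side, since $f(A)^{-1}=f(X^{-1})^{-1}=g(X)$ and likewise $f(B)^{-1}=g(Y)$, the defining formula of the harmonic mean gives
$$f(A){{!}_{v}}f(B)=\big((1-v)g(X)+vg(Y)\big)^{-1}=\big(g(X)\nabla_vg(Y)\big)^{-1}.$$
Thus the claimed inequality is equivalent to $g(X\nabla_vY)^{-1}\le\big(g(X)\nabla_vg(Y)\big)^{-1}$.

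Next I would invoke the operator concavity of $g$: for $v\in[0,1]$,
$$g(X\nabla_vY)=g\big((1-v)X+vY\big)\ge(1-v)g(X)+vg(Y)=g(X)\nabla_vg(Y).$$
Since the map $T\mapsto T^{-1}$ is order-reversing on positive invertible operators, applying it to this inequality yields exactly $g(X\nabla_vY)^{-1}\le\big(g(X)\nabla_vg(Y)\big)^{-1}$, which is the desired conclusion.

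The only non-elementary ingredient, and hence the point I would justify most carefully, is that the transpose $g(x)=1/f(1/x)$ inherits operator monotonicity (equivalently operator concavity) from $f$; everything else is bookkeeping with the inversion identities and the order-reversal of $T\mapsto T^{-1}$. If one prefers to avoid citing the transpose property outright, it can be derived from the integral representation of operator monotone functions on $(0,\infty)$, but citing the standard fact keeps the argument short.
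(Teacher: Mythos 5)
Your argument is correct and is essentially the paper's own proof: both rest on the fact that the transpose $g(x)=1/f(1/x)$ of an operator monotone function is again operator monotone (hence operator concave), apply concavity of $g$ at $A^{-1},B^{-1}$, and invert using the duality $A!_vB=(A^{-1}\nabla_vB^{-1})^{-1}$. Your write-up just makes the substitutions more explicit than the paper does.
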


\begin{proof}
Notice that  operator monotonicity of $f$ implies operator concavity, and hence
\begin{equation}\label{ineq_needed_lemma_har}
f\left( {{A}^{-1}}{{\nabla }_{v}}{{B}^{-1}} \right)\ge f\left( {{A}^{-1}} \right){{\nabla }_{v}}f\left( {{B}^{-1}} \right).
\end{equation} 
	Moreover, operator monotonicity of $f\left( t \right)$ implies operator monotonicity of $f{{\left( {{t}^{-1}} \right)}^{-1}}$.
	Now, if we rewrite \eqref{ineq_needed_lemma_har} for the function $f{{\left( {{t}^{-1}} \right)}^{-1}}$, we get $$f{{\left( A{{!}_{v}}B \right)}^{-1}}=f{{\left( {{\left( {{A}^{-1}}{{\nabla }_{v}}{{B}^{-1}} \right)}^{-1}} \right)}^{-1}}\ge f{{\left( A \right)}^{-1}}{{\nabla }_{v}}f{{\left( B \right)}^{-1}}.$$
	By taking the inverses for both sides we infer $f\left( A{{!}_{v}}B \right)\le f\left( A \right){{!}_{v}}f\left( B \right)$. (Of course, if $f$ is an operator monotone decreasing then $f\left( A{{!}_{v}}B \right)\ge f\left( A \right){{!}_{v}}f\left( B \right)$.)
\end{proof}

\begin{lemma}
	Let $A,B\in \mathcal{B}\left( \mathcal{H} \right)$ be such that $sA\le B\le tA$, and let ${{\tau }_{v}},{{\sigma }_{v}}$ be two arbitrary operator means between the arithmetic mean and harmonic mean. If $f:(0,\infty)\to (0,\infty)$ is an operator monotone  function, then for any $v\in \left[ 0,1 \right]$
	\begin{equation}\label{01}
f\left( A \right){{\sigma }_{v}}f\left( B \right)\le f\left( \xi \psi \left( A{{\tau }_{v}}B \right) \right),	
	\end{equation}
	and
	\begin{equation}\label{001}
f\left( \frac{1}{\xi \psi }A{{\sigma }_{v}}B \right)\le f\left( A \right){{\tau }_{v}}f\left( B \right).	
	\end{equation}
The reverse of the above inequalities holds when $f$ is operator decreasing.
\end{lemma}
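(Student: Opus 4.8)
The plan is to prove each inequality as a single monotone chain that threads the given sandwich \eqref{6} through the betweenness of $\sigma_v,\tau_v$, the operator concavity of $f$, and Lemma \ref{lemma_oper_mon_needed_har}. The key structural facts I will use repeatedly are: (i) any operator mean $\rho_v$ lying between $!_v$ and $\nabla_v$ satisfies $X!_vY\le X\rho_vY\le X\nabla_vY$ for all positive $X,Y$; (ii) operator monotonicity of $f$ forces operator concavity, hence $f(X)\nabla_vf(Y)\le f(X\nabla_vY)$ for $v\in[0,1]$; and (iii) $f$ preserves order, so $X\le Y$ implies $f(X)\le f(Y)$. Note that in \eqref{01} the mean $\sigma_v$ acts on the pair $(f(A),f(B))$ while $\tau_v$ acts on $(A,B)$, and in \eqref{001} the roles are swapped; keeping track of which pair each mean is applied to is what makes the two chains run in opposite directions.

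For \eqref{01} I would start on the left and push the $\nabla_v$-end of betweenness, then concavity: since $\sigma_v\le\nabla_v$, $f(A)\sigma_vf(B)\le f(A)\nabla_vf(B)\le f(A\nabla_vB)$. It then remains to dominate the argument $A\nabla_vB$ by $\xi\psi(A\tau_vB)$ before applying the monotone $f$. Rewriting \eqref{6} as $A\nabla_vB\le\xi\,A\sharp_vB$ and $A\sharp_vB\le\psi\,A!_vB$, and using $!_v\le\tau_v$, gives $A\nabla_vB\le\xi\psi\,A!_vB\le\xi\psi(A\tau_vB)$; applying $f$ and monotonicity yields $f(A\nabla_vB)\le f(\xi\psi(A\tau_vB))$, which closes the chain.

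For \eqref{001} I would run the analogous chain through the harmonic end. Using $\sigma_v\le\nabla_v$ inside the argument and the same two halves of \eqref{6}, $\tfrac{1}{\xi\psi}A\sigma_vB\le\tfrac{1}{\xi\psi}A\nabla_vB\le A!_vB$; applying the monotone $f$ gives $f(\tfrac{1}{\xi\psi}A\sigma_vB)\le f(A!_vB)$, and now Lemma \ref{lemma_oper_mon_needed_har} together with $!_v\le\tau_v$ finishes with $f(A!_vB)\le f(A)!_vf(B)\le f(A)\tau_vf(B)$.

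The two genuinely delicate points, and where I expect the only real care is needed, are: first, that the scalar $\xi\psi$ cannot be pulled outside $f$ (since $f(cX)\ne cf(X)$), so it must already sit inside the argument at the moment $f$ is applied, which is what dictates the exact order of the steps above; and second, the operator-decreasing case, where I would invoke that a decreasing operator monotone function on $(0,\infty)$ is operator convex and reverses order, so concavity in (ii), the order-preservation in (iii), and the conclusion of Lemma \ref{lemma_oper_mon_needed_har} all flip simultaneously, turning every $\le$ in the two chains into $\ge$ and producing exactly the asserted reverse inequalities.
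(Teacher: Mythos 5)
Your chains for the increasing case are correct. For \eqref{001} your argument is exactly the paper's: pass from $\sigma_v$ to $\nabla_v$ inside the argument, use \eqref{6} to get below $A!_vB$, apply Lemma \ref{lemma_oper_mon_needed_har}, and finish with $!_v\le\tau_v$. For \eqref{01} you give a direct chain ($\sigma_v\le\nabla_v$, operator concavity, then $A\nabla_vB\le\xi A\sharp_vB\le\xi\psi\,A!_vB\le\xi\psi(A\tau_vB)$ fed into the monotone $f$), whereas the paper simply cites \cite[Theorem B]{1}; your version is self-contained, and your care in keeping the scalar $\xi\psi$ inside the argument of $f$ is exactly right.

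The genuine gap is in the operator-decreasing case. It is not true that ``every $\le$ in the two chains'' flips: the links $f(A)\sigma_vf(B)\le f(A)\nabla_vf(B)$ (first step of your chain for \eqref{01}) and $f(A)!_vf(B)\le f(A)\tau_vf(B)$ (last step of your chain for \eqref{001}) are instances of $!_v\le\sigma_v,\tau_v\le\nabla_v$ applied to the positive operators $f(A),f(B)$; they hold for \emph{every} $f$ and do not reverse when $f$ is decreasing, so your sign-flipped chains break at precisely those two links. To prove the reversed inequalities you must re-route each chain through the opposite end of the betweenness and use a stronger flip of Lemma \ref{lemma_oper_mon_needed_har}: for the reverse of \eqref{001}, bound $f(A)\tau_vf(B)\le f(A)\nabla_vf(B)\le f(A!_vB)$, where the second inequality follows by applying Lemma \ref{lemma_oper_mon_needed_har} to the increasing function $1/f$ and inverting (equivalently, by the operator log-convexity of decreasing operator monotone functions, cf.\ \cite{ando}), and then use $A!_vB\ge\frac{1}{\xi\psi}A\sigma_vB$ together with the decrease of $f$; a similar re-routing via $f(A)\sigma_vf(B)\ge f(A)!_vf(B)\ge f(A\nabla_vB)\ge f(\xi\psi(A\tau_vB))$ handles the reverse of \eqref{01}. (The paper asserts the reversed inequalities without proof, so this repair is needed in your write-up rather than being something you could have lifted from the paper's argument.)
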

\begin{proof}
	The inequality \eqref{01}  follows from a more general result of \cite[Theorem B]{1}. We prove \eqref{001}. Direct calculations show that
	\[\begin{aligned}
	f\left( \frac{1}{\xi \psi }A{{\sigma }_{v}}B \right)&\le f\left( \frac{1}{\xi \psi }A{{\nabla }_{v}}B \right) \\ 
	& \le f\left( A{{!}_{v}}B \right)\quad({\text{by}}\;\eqref{6}) \\ 
	& \le f\left( A \right){{!}_{v}}f\left( B \right) \quad({\text{by\;Lemma}}\;\ref{lemma_oper_mon_needed_har}) \\ 
	& \le f\left( A \right){{\tau }_{v}}f\left( B \right)\quad({\text{since}}\;!_v\leq \tau_v).  
	\end{aligned}\]
\end{proof}
By the same strategy as in \cite[Corollary 2.1]{1}, we obtain the following result:
\begin{corollary}\label{7}
	Let $A,B\in \mathcal{B}\left( \mathcal{H} \right)$ such that $mI\le A,B\le MI$ for some scalars $0<m<M$, and let ${{\tau }_{v}},{{\sigma }_{v}}$ be two arbitrary operator means between arithmetic mean and harmonic mean. If $f:\left[ m,M \right]\subseteq \mathbb{R}\to \mathbb{R}$ is an operator monotone increasing function, then for any $v\in \left[ 0,1 \right]$
	\begin{equation}\label{02}
f\left( A \right){{\sigma }_{v}}f\left( B \right)\le f\left( L\left( m,M \right)\left( A{{\tau }_{v}}B \right) \right),
	\end{equation}
and
\begin{equation}\label{002}
f\left( \frac{1}{L\left( m,M \right)}\left( A{{\sigma }_{v}}B \right) \right)\le f\left( A \right){{\tau }_{v}}f\left( B \right),
\end{equation}
where $L\left( m,M \right)\equiv \frac{\left( m{{\nabla }_{\lambda }}M \right)\left( m{{\sharp}_{\mu }}M \right)}{\left( m{{\sharp}_{\lambda }}M \right)\left( m{{!}_{\mu }}M \right)}$, $\lambda =\min \left\{ v,1-v \right\}$, and $\mu =\max \left\{ v,1-v \right\}$.
\end{corollary}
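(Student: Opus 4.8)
The plan is to specialize the preceding lemma---which is valid under the sandwich hypothesis $sA\le B\le tA$---to the two-sided spectral bound $mI\le A,B\le MI$. The first step is to read off admissible constants $s,t$: from $B\ge mI$ and $A\le MI$ we get $\frac{m}{M}A\le mI\le B$, while from $B\le MI$ and $A\ge mI$ we get $B\le MI\le\frac{M}{m}A$. Hence the lemma applies with $s=\frac{m}{M}$ and $t=\frac{M}{m}$, and it produces \eqref{01} and \eqref{001} with the constants $\xi$ and $\psi$ evaluated at these values.

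Next I would rewrite $\xi$ and $\psi$ in terms of scalar means. Using the homogeneity of the means, the two candidates inside $\xi=\max\left\{\frac{(1-v)+vs}{s^{v}},\frac{(1-v)+vt}{t^{v}}\right\}$ become $G(v)$ and $G(1-v)$, where $G(w)=\frac{m\nabla_{w}M}{m\sharp_{w}M}$; similarly the two candidates inside $\psi$ become $H(v)$ and $H(1-v)$, where $H(w)=\frac{m\sharp_{w}M}{m!_{w}M}$. A direct computation gives the identity $H(w)=G(1-w)$, so both $\xi$ and $\psi$ are maxima over the single pair $\{G(v),G(1-v)\}$; in particular $\xi=\psi$.

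The crux, and the step I expect to be the main obstacle, is to decide which member of this pair is larger, that is, to show $G(w)\ge G(1-w)$ for every $w\le\frac{1}{2}$. Granting this, $\xi=G(\lambda)$ with $\lambda=\min\{v,1-v\}$ and, dually, $\psi=H(\mu)$ with $\mu=\max\{v,1-v\}$, whence $\xi\psi=G(\lambda)H(\mu)=L(m,M)$, matching the stated formula. To prove the monotonicity claim I would set $\phi(w)=\log G(w)=\log(m\nabla_{w}M)-[(1-w)\log m+w\log M]$; since $\log(m\sharp_{w}M)$ is linear in $w$ and $\log(m\nabla_{w}M)$ is concave, $\phi$ is concave and vanishes at $w=0,1$. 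Writing $\delta(w)=\phi(w)-\phi(1-w)$ and substituting $u=1-2w$, the difference takes the form $\delta=\log\frac{S-uD}{S+uD}+u\log\frac{S+D}{S-D}$ with $S=m+M$ and $D=M-m$; one checks that $\delta=0$ at $u=0,1$ and $\delta''<0$ on $(0,1)$, so concavity forces $\delta\ge0$, which is exactly $G(w)\ge G(1-w)$ for $w\le\frac{1}{2}$.

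Finally I would feed $\xi\psi=L(m,M)$ back into \eqref{01} and \eqref{001}: since $f$ is operator monotone increasing, monotonicity converts \eqref{01} into \eqref{02} and \eqref{001} into \eqref{002}. The one delicate point is that $f$ is assumed defined only on $[m,M]$, whereas the scaled arguments $L(m,M)(A\tau_{v}B)$ and $\frac{1}{L(m,M)}(A\sigma_{v}B)$ need not have their spectra inside $[m,M]$; I would handle this exactly as in \cite[Corollary 2.1]{1}. Since that adjustment is a routine technical adaptation, the genuine content of the proof is concentrated in the mean-theoretic identity $\xi\psi=L(m,M)$.
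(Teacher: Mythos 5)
Your proposal is correct and follows essentially the same route as the paper, which gives no details and simply invokes ``the same strategy as in [1, Corollary 2.1]'': that strategy is precisely your specialization $s=\frac{m}{M}$, $t=\frac{M}{m}$ in the preceding lemma, followed by the identification $\xi\psi=L(m,M)$. Your supporting computations check out — in particular the reductions $\xi=\psi=\max\{G(v),G(1-v)\}$, the identity $H(w)=G(1-w)$, and the concavity argument giving $G(w)\ge G(1-w)$ for $w\le\tfrac12$ (so that $\xi\psi=G(\lambda)H(\mu)=L(m,M)$) are all valid, and you supply exactly the details the paper delegates to the reference.
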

Now we are ready to present the following  extensions of the Ando-Hiai inequalities \eqref{8} and \eqref{11}.
\begin{theorem}\label{thm_ando_hiai_other_mean}
	Let $A,B\in \mathcal{B}\left( \mathcal{H} \right)$ such that $mI\le A,B\le MI$ for some scalars $0<m<M$, and let ${{\tau }_{v}},{{\sigma }_{v}}$ be two arbitrary operator means between arithmetic mean and harmonic mean. 
	\begin{equation}\label{10}
{{\left\| A{{\sigma }_{v}}B \right\|}^{p}}\le L\left( {{m}^{p}},{{M}^{p}} \right)\left\| {{A}^{p}}{{\tau }_{v}}{{B}^{p}} \right\|\quad\text{ for all }p>1	
	\end{equation}
	\begin{equation}\label{12}
\left\| {{A}^{p}}{{\sigma }_{v}}{{B}^{p}} \right\|\le L\left( {{m}^{p}},{{M}^{p}} \right){{\left\| A{{\tau }_{v}}B \right\|}^{p}}\quad\text{ for all }p>1.
	\end{equation}
\end{theorem}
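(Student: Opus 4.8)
The plan is to obtain both \eqref{10} and \eqref{12} from Corollary \ref{7} by feeding in the single operator monotone function $f(t)=t^{1/p}$ --- which is operator monotone increasing on $(0,\infty)$ for $p>1$ by the L\"owner--Heinz inequality --- applied after the substitution $A\mapsto A^p$, $B\mapsto B^p$. The substitution is the crucial move: the hypothesis $mI\le A,B\le MI$ gives $m^pI\le A^p,B^p\le M^pI$ by functional calculus, so Corollary \ref{7} applies verbatim with $(m,M)$ replaced by $(m^p,M^p)$, and this is precisely where the constant $L(m^p,M^p)$ appears. I would treat \eqref{12} and \eqref{10} in turn, drawing them from \eqref{002} and \eqref{02} respectively.

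For \eqref{12} I would apply \eqref{002} with $f(t)=t^{1/p}$ to the pair $(A^p,B^p)$; since $f(A^p)=A$ and $f(B^p)=B$, it reads
$$\Big(\tfrac{1}{L(m^p,M^p)}\,A^p\sigma_v B^p\Big)^{1/p}\le A\tau_v B,$$
that is $(A^p\sigma_v B^p)^{1/p}\le L(m^p,M^p)^{1/p}(A\tau_v B)$. Taking the operator norm, using $\|X^{1/p}\|=\|X\|^{1/p}$ for $X\ge0$, and raising to the power $p$ yields \eqref{12}. For \eqref{10} I would apply \eqref{02} with the same $f$ and the same substitution to get $A\sigma_v B\le L(m^p,M^p)^{1/p}(A^p\tau_v B^p)^{1/p}$; monotonicity of the norm, the same identity $\|X^{1/p}\|=\|X\|^{1/p}$, and raising to the power $p$ then give \eqref{10}.

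The only genuinely delicate point is the passage from the operator inequality to the norm inequality, i.e.\ relating $\|X^{1/p}\|$ to $\|X\|^{1/p}$. For the operator (spectral) norm this is the exact identity $\|X^{1/p}\|=\|X\|^{1/p}$ for positive $X$, and both halves close at once; this is the reading of $\|\cdot\|$ I would adopt. If one instead aimed at an arbitrary unitarily invariant norm, the available tool is only $\|X^p\|\le\|X\|^p$ ($p\ge1$, $X\ge0$), whose direction is favorable for \eqref{12} --- there one can mimic the Ando--Hiai argument of Section 2, promoting $(A^p\sigma_v B^p)^{1/p}\le L(m^p,M^p)^{1/p}(A\tau_v B)$ to a unitary comparison $A^p\sigma_v B^p\le L(m^p,M^p)\,U(A\tau_v B)^pU^*$ --- but unfavorable for \eqref{10}, where the $p$-th power sits on the quantity whose norm must be bounded from above. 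Keeping track of which direction of the norm--power inequality each half needs, and thereby pinning down the correct norm, is the main thing to get right; once $f(t)=t^{1/p}$ and the substitution $A\mapsto A^p,\,B\mapsto B^p$ are fixed, the remaining manipulations are routine functional calculus.
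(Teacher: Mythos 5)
Your proposal is correct and follows essentially the same route as the paper: it applies Corollary \ref{7} (inequalities \eqref{02} and \eqref{002}) with $f(t)=t^{1/p}$ and the substitution $A\mapsto A^{p}$, $B\mapsto B^{p}$, and then passes to norms. The paper likewise reads $\left\| \;\cdot\; \right\|$ as the operator norm here (in the spirit of the Nakamoto--Seo inequality \eqref{11}), so the identity $\left\| X^{1/p} \right\|=\left\| X \right\|^{1/p}$ you single out is exactly the step it uses implicitly in writing $\left\| A{\sigma }_{v}B \right\|\le L\left( m^{p},M^{p} \right)^{1/p}\left\| A^{p}{\tau }_{v}B^{p} \right\|^{1/p}$.
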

\begin{proof}
It follows from the inequality \eqref{02} that
\[{{A}^{\frac{1}{p}}}{{\sigma }_{v}}{{B}^{\frac{1}{p}}}\le L{{\left( m,M \right)}^{\frac{1}{p}}}{{\left( A{{\tau }_{v}}B \right)}^{\frac{1}{p}}}\]
where $p>1$.  By replacing $A$ by ${{A}^{p}}$ and $B$ by ${{B}^{p}}$, we get 
\begin{equation*}
A{{\sigma }_{v}}B\le L{{\left( {{m}^{p}},{{M}^{p}} \right)}^{\frac{1}{p}}}{{\left( {{A}^{p}}{{\tau }_{v}}{{B}^{p}} \right)}^{\frac{1}{p}}}.
\end{equation*}
This implies that
\[\left\| A{{\sigma }_{v}}B \right\|\le L{{\left( {{m}^{p}},{{M}^{p}} \right)}^{\frac{1}{p}}}{{\left\| {{A}^{p}}{{\tau }_{v}}{{B}^{p}} \right\|}^{\frac{1}{p}}},\]
which is equivalent to \eqref{10}.

Employing inequality \eqref{002}, and by the same method as in the proof of inequality \eqref{10} we get the desired inequality \eqref{12}.
\end{proof}

Another interesting application of Corollary \ref{7} is the following extension of the Golden-Thompson inequality to arbitrary means.

\begin{corollary}\label{cor_GT_any_mean}
Let $A,B\in \mathcal{M}_n$ be such that $mI\le A,B\le MI$ for some scalars $0<m<M$, and let ${{\sigma }_{v}}$ be an arbitrary operator mean between the arithmetic mean and harmonic mean. Then, for $0\leq v\leq 1, p>0$ and  any unitaruly invariant norm $\|\;\;\|$,
$$\left\|\left(e^{pA}\sigma_ve^{pB}\right)^{\frac{1}{p}}\right\|\leq L^{\frac{1}{p}}\left\|e^{A\nabla_vB}\right\|$$
and 

$$\left\|e^{A\nabla_vB}\right\|\leq L^{\frac{1}{p}}\left\|\left(e^{pA}\sigma_ve^{pB}\right)^{\frac{1}{p}}\right\|,$$
 where $L:=L(e^{pm},e^{pM})$ is as in Corollary \ref{7}. 
\end{corollary}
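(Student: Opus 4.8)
The plan is to follow the same scheme as the Golden-Thompson part of the preceding theorem, only replacing the role played there by Corollary \ref{cor_oper_mono} with Corollary \ref{7}. Throughout I fix $0<q<\min\{1,p\}$, so that $f(t)=t^{q/p}$ is an operator monotone increasing function on $(0,\infty)$ (because $0<q/p<1$) and so that the gauge $\|X\|_q:=\big\||X|^{1/q}\big\|^{q}$ is a unitarily invariant norm, exactly as recorded in the previous proof. Note also that $mI\le A,B\le MI$ forces $e^{pm}I\le e^{pA},e^{pB}\le e^{pM}I$, so that the constant attached to Corollary \ref{7} after the exponential substitution is precisely $L=L(e^{pm},e^{pM})$.

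For the first inequality, I would apply \eqref{002} with $\tau_v=\nabla_v$ to the function $f(t)=t^{q/p}$ and then substitute $(A,B)\mapsto (e^{pA},e^{pB})$. Since $(e^{pA})^{q/p}=e^{qA}$ and $(e^{pB})^{q/p}=e^{qB}$, this produces the operator inequality $(e^{pA}\sigma_v e^{pB})^{q/p}\le L^{q/p}\,(e^{qA}\nabla_v e^{qB})$. Applying the monotone, unitarily invariant gauge $\|\cdot\|_q$ to both (positive) sides and unwinding the definition, the $q/p$ and $1/q$ powers collapse to give $\big\|(e^{pA}\sigma_v e^{pB})^{1/p}\big\|\le L^{1/p}\big\|(e^{qA}\nabla_v e^{qB})^{1/q}\big\|$. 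Letting $q\to 0^+$ and invoking the Lie-Trotter limit $\lim_{q\to 0^+}(e^{qA}\nabla_v e^{qB})^{1/q}=e^{A\nabla_v B}$ yields the first claimed inequality.

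For the reverse inequality, I would run the identical argument starting instead from \eqref{02}, where I now take the mean denoted $\sigma_v$ in that corollary to be $\nabla_v$ and the mean denoted $\tau_v$ there to be the given $\sigma_v$. With $f(t)=t^{q/p}$ and the same exponential substitution, \eqref{02} becomes $e^{qA}\nabla_v e^{qB}\le L^{q/p}\,(e^{pA}\sigma_v e^{pB})^{q/p}$; applying $\|\cdot\|_q$ and letting $q\to 0^+$ as above gives $\big\|e^{A\nabla_v B}\big\|\le L^{1/p}\big\|(e^{pA}\sigma_v e^{pB})^{1/p}\big\|$, which is the second claimed inequality.

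The routine parts are the functional-calculus substitution and the bookkeeping of the fractional powers inside $\|\cdot\|_q$; these are a direct transcription of the computation already carried out for $\sharp_v$ in the proof of the theorem. The only point needing genuine care is the passage $q\to 0^+$: here one uses that for the arithmetic mean $(e^{qA}\nabla_v e^{qB})^{1/q}=\big((1-v)e^{qA}+v\,e^{qB}\big)^{1/q}$ expands as $\big(I+q(A\nabla_v B)+o(q)\big)^{1/q}\to e^{A\nabla_v B}$, the elementary Lie-Trotter limit (the analogue for $\sharp_v$ being the fact recalled from \cite{hiai_petz,nakamoto} in the previous proof). I expect this limit, together with checking that the admissible range $0<q<\min\{1,p\}$ is nonempty for every $p>0$, to be the only obstacle; everything else simply mirrors the geometric-mean argument with $\gamma_p$ replaced by $L$.
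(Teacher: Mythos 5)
Your proposal is correct and follows the same scheme as the paper: apply \eqref{002} (resp.\ \eqref{02}) with $f(t)=t^{q/p}$, substitute $(e^{pA},e^{pB})$, pass to the unitarily invariant norm $\|X\|_q=\big\||X|^{1/q}\big\|^q$ for $0<q<\min\{1,p\}$, and let $q\to 0^+$. The one genuine difference is the choice of the intermediate mean $\tau_v$ in that step: the paper takes $\tau_v=\sharp_v$, so its limiting step rests on the Hiai--Petz limit $\lim_{q\to 0^+}(e^{qA}\sharp_v e^{qB})^{1/q}=e^{A\nabla_v B}$ cited from the literature, whereas you take $\tau_v=\nabla_v$ and only need the elementary Lie--Trotter expansion $\big(I+q(A\nabla_v B)+o(q)\big)^{1/q}\to e^{A\nabla_v B}$. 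Both choices are admissible (each of $\sharp_v$ and $\nabla_v$ lies between $!_v$ and $\nabla_v$), so your version is, if anything, marginally more self-contained; your handling of the second inequality via \eqref{02} with the roles of the two means swapped is exactly what the paper leaves to the reader with the phrase ``follows similarly.''
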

\begin{proof}
In \eqref{002}, let $f(t)=t^{\frac{q}{p}}$, where $0<q<p$ and replace $(A,B)$ with $\left(e^{pA},e^{pB}\right)$. Then
$$\left(e^{pA}\sigma_ve^{pB}\right)^{\frac{q}{p}}\leq L^{\frac{q}{p}}\left(e^{qA}\sharp_ve^{qB}\right)\Rightarrow
\left\|\left(e^{pA}\sigma_ve^{pB}\right)^{\frac{q}{p}}\right\|\leq \left\|L^{\frac{q}{p}}\left(e^{qA}\sharp_ve^{qB}\right)\right\| .$$
Again, since this is valid for any unitarily invariant norm, it is still true for the norm $\|X\|_q:=\left\|\;|X|^{\frac{1}{q}}\right\|^q$ provided that $q<1.$ That is,
$$\left\|\left(e^{pA}\sigma_ve^{pB}\right)^{\frac{1}{p}}\right\|\leq \left\|L^{\frac{1}{p}}\left(e^{qA}\sharp_ve^{qB}\right)^{\frac{1}{q}}\right\|, p>q, 0<q<1.$$ Then letting $q\to 0^+$ implies the first desired inequality.

The second desired inequality follows similarly from \eqref{02}.

\end{proof}
As a consequence of Corollary \ref{cor_GT_any_mean}, we have  the following limit; which is a normed version of the limit \cite{hiai_petz}
$$\lim_{q\to 0^{+}}\left(e^{qA}\sharp_v e^{qB}\right)^{\frac{1}{q}}=e^{A\nabla_vB}.$$
\begin{corollary}
Under the assumptions of Corollary \ref{cor_GT_any_mean}, we have
$$\lim_{p\to 0^+}\left\|\left(e^{pA}\sigma_ve^{pB}\right)^{\frac{1}{p}}\right\| = \left\|e^{A\nabla_vB}\right\|.$$
\end{corollary}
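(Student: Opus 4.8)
The plan is to sandwich the quantity $\left\|\left(e^{pA}\sigma_ve^{pB}\right)^{1/p}\right\|$ between two expressions that both converge to $\left\|e^{A\nabla_vB}\right\|$ as $p\to 0^+$, and then invoke the squeeze theorem. The two inequalities furnished by Corollary \ref{cor_GT_any_mean} give exactly
\[
L^{-\frac{1}{p}}\left\|e^{A\nabla_vB}\right\|\le \left\|\left(e^{pA}\sigma_ve^{pB}\right)^{\frac{1}{p}}\right\|\le L^{\frac{1}{p}}\left\|e^{A\nabla_vB}\right\|,
\]
where $L=L(e^{pm},e^{pM})$ depends on $p$. So the whole problem reduces to showing that $L^{1/p}\to 1$ as $p\to 0^+$ (equivalently $L^{-1/p}\to 1$), after which the left and right bounds both squeeze down to $\left\|e^{A\nabla_vB}\right\|$.

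First I would write out $L^{1/p}$ explicitly using the definition in Corollary \ref{7}, namely $L(a,b)=\frac{(a\nabla_\lambda b)(a\sharp_\mu b)}{(a\sharp_\lambda b)(a!_\mu b)}$ with $a=e^{pm}$, $b=e^{pM}$, and $\lambda,\mu$ the fixed constants $\min\{v,1-v\},\max\{v,1-v\}$ independent of $p$. Taking logarithms, I would study $\frac{1}{p}\log L(e^{pm},e^{pM})$ and show this tends to $0$ as $p\to 0^+$. The natural route is a Taylor/Maclaurin expansion: for small $p$ one has $e^{pm}=1+pm+O(p^2)$ and $e^{pM}=1+pM+O(p^2)$, so each of the four scalar means $e^{pm}\nabla_\lambda e^{pM}$, $e^{pm}\sharp_\mu e^{pM}$, etc., equals $1+p(\,\cdot\,)+O(p^2)$, where the linear coefficient is the corresponding weighted mean of $m$ and $M$. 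The key algebraic fact is that all four weighted means of the pair $(m,M)$ — arithmetic $\nabla_\lambda$, geometric $\sharp_\mu$, geometric $\sharp_\lambda$, harmonic $!_\mu$ — share the same first-order behavior in the sense that the combination $(a\nabla_\lambda b)(a\sharp_\mu b)$ and $(a\sharp_\lambda b)(a!_\mu b)$ agree to order $p$, so the $O(p)$ terms in $\log L$ cancel and $\log L=O(p^2)$, giving $\frac{1}{p}\log L=O(p)\to 0$.

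The main obstacle will be verifying that first-order cancellation cleanly, since it requires tracking the linear coefficients of the numerator product against the denominator product and checking they coincide. The cleanest way to organize this is to note that each weighted mean $a\,\omega_t\,b$ (for $\omega\in\{\nabla,\sharp,!\}$) is a smooth function of $p$ with value $1$ at $p=0$, so $\log(a\,\omega_t\,b)=p\,c_\omega + O(p^2)$ where $c_\omega$ is the derivative at $0$; for all three means this derivative equals the \emph{arithmetic} weighted average $(1-t)m+tM$, because every homogeneous symmetric mean is first-order equal to the arithmetic mean along the diagonal. Hence $\log L = p\big[((1-\lambda)m+\lambda M)+((1-\mu)m+\mu M)-((1-\lambda)m+\lambda M)-((1-\mu)m+\mu M)\big]+O(p^2)=O(p^2)$, and $\frac{1}{p}\log L\to 0$. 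With $L^{1/p}\to 1$ established, the squeeze with the two inequalities of Corollary \ref{cor_GT_any_mean} finishes the proof; alternatively, one may simply cite the scalar limit $\lim_{q\to0^+}(e^{qA}\sharp_v e^{qB})^{1/q}=e^{A\nabla_vB}$ together with continuity of the norm to handle the pointwise limit and use the constant-$L$ estimate only to control the mean $\sigma_v$, but the squeeze argument is the more self-contained route.
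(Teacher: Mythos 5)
Your proof is correct and follows essentially the same route as the paper: sandwich the norm between $L^{-1/p}\left\|e^{A\nabla_vB}\right\|$ and $L^{1/p}\left\|e^{A\nabla_vB}\right\|$ via Corollary \ref{cor_GT_any_mean} and apply the squeeze theorem after showing $L^{1/p}\to 1$. The paper dismisses that last limit as ``direct computations,'' whereas you actually carry them out via the first-order Taylor cancellation, which is a welcome addition rather than a deviation.
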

\begin{proof}
From Corollary \ref{cor_GT_any_mean}, we have
\begin{equation}\label{needed_limit}
\frac{1}{L^{\frac{1}{p}}}\left\|e^{A\nabla_vB}\right\|\leq \left\|\left(e^{pA}\sigma_ve^{pB}\right)^{\frac{1}{p}}\right\|\leq L^{\frac{1}{p}}\left\|e^{A\nabla_vB}\right\|,
\end{equation}
where
$$L=\frac{\left(e^{pm}\nabla_{\lambda}e^{pM}\right)\left(e^{pm}\sharp_{\mu}e^{pM}\right)}{\left(e^{pm}\sharp_{\lambda}e^{pM}\right)\left( e^{pm}!_{\mu}e^{pM}\right)}; \lambda=\min\{v,1-v\},\;\mu=1-\lambda.$$
Direct computations show that $\lim\limits_{p\to 0^+}L^{\frac{1}{p}}=1.$ Then the desired limit follows from \eqref{needed_limit} by Squeeze theorem.
\end{proof}

On account of \eqref{001} and the fact that for any operator monotone function $f$ and $\alpha \le 1$, $\alpha f\left( t \right)\le f\left( \alpha t \right)$, and usgin Ando's inequality $\Phi \left( A\sigma B \right)\le \Phi \left( A \right)\sigma \Phi \left( B \right)$, we get
\begin{equation}\label{e}
f\left( \Phi \left( A{{\sigma }_{v}}B \right) \right)\le \xi \psi \left( f\left( \Phi \left( A \right) \right){{\tau }_{v}}f\left( \Phi \left( B \right) \right) \right).
\end{equation}
The above inequality can be regarded as a reverse of \cite[Theorem B]{1}. We conclude this paper by the following proposition; where the generalized Kantorovich constant is obtained instead of $\xi \psi $. Such inequalities are usually referred to as P\'{o}lya-type inequalities.\\
In this proposition, we use the notations
\[{{a}_{f}}\equiv \frac{f\left( M \right)-f\left( m \right)}{M-m},\quad{{b}_{f}}\equiv \frac{Mf\left( m \right)-mf\left( M \right)}{M-m},\] so that $a_ft+b_f$ represents the secant line of $f$ at $(m,f(m))$ and $(M,f(M))$.
\begin{proposition}\label{5}
	Let $A,B\in \mathcal{B}\left( \mathcal{H} \right)$ such that $mI\le A,B\le MI$ for some scalars $0<m<M$,  $\tau_v ,\sigma_v $ be two arbitrary operator means between arithmetic and harmonic mean, and let $K\left( m,M,f \right)\equiv \max \left\{ \frac{{{a}_{f}}t+{{b}_{f}}}{f\left( t \right)}:\text{ }t\in \left[ m,M \right] \right\}$. 
If $f:\left(0,\infty\right)\to (0,\infty)$ is an operator monotone decreasing function, then
\begin{equation}\label{eee}
f\left( \Phi \left( A \right) \right){{\tau }_{v}}f\left( \Phi \left( B \right) \right)\le K\left( m,M,f \right)f\left( \Phi \left( A{{\sigma }_{v}}B \right) \right).
\end{equation}
On the other hand, if $f:\left(0,\infty\right)\to \mathbb{R}^{+}$ is operator monotone increasing, then
		\begin{equation}\label{3}
f\left( \Phi \left( A{{\sigma }_{v}}B \right) \right)\le K\left( m,M,\frac{1}{f} \right)\left( f\left( \Phi \left( A \right) \right){{\tau }_{v}}f\left( \Phi \left( B \right) \right) \right).
		\end{equation}
\end{proposition}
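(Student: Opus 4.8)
The plan is to replace the crude constant $\xi \psi$ appearing in \eqref{e} by the sharp reverse-chord ratio that \emph{defines} $K(m,M,\cdot)$, exploiting that an operator monotone decreasing $f$ is convex while an operator monotone increasing $f$ has $1/f$ convex. The only analytic input I would need is applied through the functional calculus of a \emph{single} self-adjoint operator, so scalar convexity suffices throughout: for a convex $g$ on $[m,M]$ one has both the chord bound $g(t)\le a_g t+b_g$ and, since then $a_g t+b_g\ge g(t)>0$, its reverse $a_g t+b_g\le K(m,M,g)\,g(t)$. The structural facts I would use are that $\Phi$ unital and positive forces $mI\le\Phi(A),\Phi(B),\Phi(A\sigma_v B),\Phi(A)\nabla_v\Phi(B)\le MI$, that the means satisfy $!_v\le\sigma_v,\tau_v\le\nabla_v$, and that $\nabla_v$ is affine (so it commutes with $\Phi$ and with adding multiples of $I$).

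For \eqref{eee}, with $f$ operator monotone decreasing (hence convex, with $a_f<0$), I would proceed as follows. Since $\tau_v\le\nabla_v$, we have $f(\Phi(A))\,\tau_v\,f(\Phi(B))\le f(\Phi(A))\,\nabla_v\,f(\Phi(B))$. Applying the convex chord bound separately to $\Phi(A)$ and $\Phi(B)$ and using affinity of $\nabla_v$ together with linearity of $\Phi$, the right-hand side collapses to $a_f\,\Phi(A\nabla_v B)+b_f I$. Because $a_f<0$ and $\Phi(A\sigma_v B)\le\Phi(A\nabla_v B)$ (from $\sigma_v\le\nabla_v$ and positivity of $\Phi$), this is $\le a_f\,\Phi(A\sigma_v B)+b_f I$. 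Finally the reverse-chord inequality $a_f t+b_f\le K(m,M,f)f(t)$, read through the functional calculus at $\Phi(A\sigma_v B)$, gives $a_f\,\Phi(A\sigma_v B)+b_f I\le K(m,M,f)\,f(\Phi(A\sigma_v B))$, and chaining yields \eqref{eee}.

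For \eqref{3}, with $f$ operator monotone increasing, the clean route is to run the same scheme with the convex function $1/f$. First, $\sigma_v\le\nabla_v$, positivity of $\Phi$, and monotonicity of $f$ give $f(\Phi(A\sigma_v B))\le f(\Phi(A)\nabla_v\Phi(B))$. The reverse-chord inequality for $1/f$ reads $f(t)\le K(m,M,1/f)\,(a_{1/f}t+b_{1/f})^{-1}$ on $[m,M]$, so evaluating at $\Phi(A)\nabla_v\Phi(B)$ gives $f(\Phi(A)\nabla_v\Phi(B))\le K(m,M,1/f)\,(P\nabla_v Q)^{-1}$, where $P=a_{1/f}\Phi(A)+b_{1/f}I$ and $Q=a_{1/f}\Phi(B)+b_{1/f}I$ are positive. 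Invoking the harmonic--arithmetic duality $(P\nabla_v Q)^{-1}=P^{-1}\,!_v\,Q^{-1}$, the chord bound for $1/f$ in the form $P^{-1}\le f(\Phi(A))$ and $Q^{-1}\le f(\Phi(B))$, and finally $!_v\le\tau_v$, I reach $(P\nabla_v Q)^{-1}\le f(\Phi(A))\,\tau_v\,f(\Phi(B))$, which delivers \eqref{3}.

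The routine parts are the two chord estimates and the reductions to $\nabla_v$; the step needing genuine care is the bookkeeping of inequality \emph{directions}. In \eqref{eee} the sign $a_f<0$ reverses one inequality, and in \eqref{3} two order reversals (taking inverses, and dualizing $\nabla_v$ into $!_v$) must be tracked so that the means land on the correct side. I would also verify at each functional-calculus step that the operator involved ($\Phi(A\sigma_v B)$, $\Phi(A)\nabla_v\Phi(B)$, $P$, $Q$) has spectrum in $[m,M]$, respectively is positive, since this is exactly what lets the scalar chord inequalities transfer. The single conceptual point, from which the rest is mechanical, is the recognition that $K(m,M,f)$ (respectively $K(m,M,1/f)$) is precisely the reverse of the relevant chord estimate.
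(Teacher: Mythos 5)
Your argument is correct. For \eqref{eee} you are essentially reproducing the paper's Mond--Pe\v{c}ari\'{c} computation with the optimal parameters $\alpha=K(m,M,f)$ and $\beta=0$ chosen from the outset: both proofs run chord bound, then $\sigma_v\le\nabla_v$ together with $a_f\le 0$, then the reverse chord bound applied to the single operator $A\sigma_v B$; the only cosmetic difference is that you carry $\Phi(A),\Phi(B)$ along from the start and use $\Phi(A\sigma_vB)\le\Phi(A\nabla_vB)$ directly, whereas the paper first proves the inequality without $\Phi$ and then inserts it via Ando's inequality $\Phi(A\sigma_vB)\le\Phi(A)\sigma_v\Phi(B)$ combined with the monotone decrease of $f$. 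For \eqref{3}, however, your route genuinely differs. The paper applies the already-established \eqref{ee} to the operator monotone decreasing function $1/f$ and then takes operator inverses; this produces the adjoint mean $\tau_v^{*}$ on the right-hand side, which is then replaced by $\tau_v$ on the grounds that $\tau_v$ is arbitrary and adjoints preserve the class of means between $!_v$ and $\nabla_v$. You instead chain $f(\Phi(A\sigma_vB))\le f(\Phi(A)\nabla_v\Phi(B))$, the reverse chord bound for $1/f$ at the single operator $\Phi(A)\nabla_v\Phi(B)$, the identity $(P\nabla_vQ)^{-1}=P^{-1}\,!_v\,Q^{-1}$, the chord bounds $P^{-1}\le f(\Phi(A))$ and $Q^{-1}\le f(\Phi(B))$, and finally $!_v\le\tau_v$; this lands directly on the stated mean and avoids the adjoint-mean bookkeeping, which is the least transparent step of the paper's proof, at the modest price of checking positivity of $P$ and $Q$ (which you correctly note follows from $a_{1/f}t+b_{1/f}\ge 1/f(t)>0$ on $[m,M]$). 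Two minor remarks: both arguments require $\Phi$ to be unital so that all the operators involved have spectra in $[m,M]$ (the proposition leaves this implicit, but it is stated in the introduction), and $a_f$ need only satisfy $a_f\le 0$ rather than $a_f<0$ for a decreasing $f$, though the step where you use the sign survives equality.
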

\begin{proof}
On account of \cite[Theorem 2.1]{ando}, operator monotone decreasing implies operator convexity (and of course, convexity). Therefore,
\[f\left( t \right)\le {{a}_{f}}t+{{b}_{f}},\quad t\in \left[ m,M \right].\] 
By the assumption $mI\le A,B\le MI$, we can write
\[f\left( A \right)\le {{a}_{f}}A+{{b}_{f}}I\quad\text{ and }\quad f\left( B \right)\le {{a}_{f}}B+{{b}_{f}}I.\]
Whence 
\[f\left( A \right){{\nabla }_{v}}f\left( B \right)\le {{a}_{f}}\left( A{{\nabla }_{v}}B \right)+{{b}_{f}}I.\]
Now, by applying the Mond--Pe\v cari\'c method we have for a given $\alpha >0$,
\[\begin{aligned}
& f\left( A \right){{\nabla }_{v}}f\left( B \right)-\alpha f\left( A{{\sigma }_{v}}B \right) \\ 
& \le {{a}_{f}}\left( A{{\nabla }_{v}}B \right)+{{b}_{f}}I-\alpha f\left( A{{\sigma }_{v}}B \right) \\ 
& \le {{a}_{f}}\left( A{{\sigma }_{v}}B \right)+{{b}_{f}}I-\alpha f\left( A{{\sigma }_{v}}B \right) \\ 
& \le \underset{t\in \left[ m,M \right]}{\mathop{\max }}\,\left\{ {{a}_{f}}t+{{b}_{f}}-\alpha f\left( t \right) \right\}I \\ 
\end{aligned}\]
where in the second inequality we used ${{\sigma }_{v}}\le {{\nabla }_{v}}$ and ${{a}_{f}}\le 0$. Consequently,
\begin{equation*}
f\left( A \right)\nabla f\left( B \right)\le \beta I+\alpha f\left( A\sigma B \right).
\end{equation*}
On the other hand, by taking into account ${{\tau }_{v}}\le {{\nabla }_{v}}$, we get
\begin{equation}\label{1}
f\left( A \right){{\tau }_{v}}f\left( B \right)\le \beta I+\alpha f\left( A{{\sigma }_{v}}B \right).
\end{equation}
Now, by replacing $A,B$ by $\Phi \left( A \right),\Phi \left( B \right)$, respectively, and applying Ando's inequality, we get
\[\begin{aligned}
 f\left( \Phi \left( A \right) \right){{\tau }_{v}}f\left( \Phi \left( B \right) \right)&\le \beta I+\alpha f\left( \Phi \left( A \right){{\sigma }_{v}}\Phi \left( B \right) \right) \\ 
& \le \beta I+\alpha f\left( \Phi \left( A{{\sigma }_{v}}B \right) \right).  
\end{aligned}\]
By choosing appropriate $\alpha $ and $\beta$ in the above inequality we have
\begin{equation}\label{ee}
f\left( \Phi \left( A \right) \right){{\tau }_{v}}f\left( \Phi \left( B \right) \right)\le K\left( m,M,f \right)f\left( \Phi \left( A{{\sigma }_{v}}B \right) \right),
\end{equation}
whenever $f$ is an operator monotone decreasing. On the other hand, we know that if $f$ is operator monotone (increasing) on $\left( 0,\infty  \right)$, then ${1}/{f}\;$ is operator monotone decreasing on $\left( 0,\infty  \right)$. It follows from the inequality \eqref{ee} that
\begin{equation}\label{4}
f{{\left( \Phi \left( A \right) \right)}^{-1}}{{\tau }_{v}}f{{\left( \Phi \left( B \right) \right)}^{-1}}\le K\left( m,M,\frac{1}{f} \right)f{{\left( \Phi \left( A{{\sigma }_{v}}B \right) \right)}^{-1}}.
\end{equation}
Taking inverse from inequality \eqref{4}, we have
\[\begin{aligned}
 f\left( \Phi \left( A{{\sigma }_{v}}B \right) \right)&\le K\left( m,M,\frac{1}{f} \right){{\left( f{{\left( \Phi \left( A \right) \right)}^{-1}}{{\tau }_{v}}f{{\left( \Phi \left( B \right) \right)}^{-1}} \right)}^{-1}} \\ 
& =K\left( m,M,\frac{1}{f} \right)\left( f\left( \Phi \left( A \right) \right)\tau _{v}^{*}f\left( \Phi \left( B \right) \right) \right)  
\end{aligned}\]
where $\tau _{v}^{*}$ is the adjoint of ${{\tau }_{v}}$. Now, since ${{\tau }_{v}}$ is arbitrary, by replacing $\tau _{v}^{*}$ by ${{\tau }_{v}}$ we get the desired inequality \eqref{3}.
\end{proof}

{\tiny \vskip 0.3 true cm }

{\tiny $^2$Young Researchers and Elite Club, Mashhad Branch, Islamic Azad
	University, Mashhad, Iran. }

{\tiny \textit{E-mail address:} hrmoradi@mshdiau.ac.ir }

{\tiny \vskip 0.3 true cm }

%{\tiny $^3$Department of Mathematics, University of Sharjah, Sharjah UAE,	\textit{Email address:} m.sababheh@sharjah.ac.ae }

{\tiny $^3$Deptartment of Basic Sciences, Princess Sumaya University for Technology, Amman,
	Jordan.}
	
	{\tiny{ \textit{E-mail address:} sababheh@psut.edu.jo}}
%-----------------------------------------------------------------------------
%-----------------------------------------------------------------------------
\end{document}